\title{An analogue of Mahler's transference theorem for multiplicative Diophantine approximation}
\author{Oleg\,N.\,German}
\date{}
\theoremstyle{definition}
\newtheorem{definition}{Definition}
\newtheorem*{notation*}{Notation}
\theoremstyle{remark}
\newtheorem*{remark*}{Remark}
\theoremstyle{plain}
\newtheorem{theorem}{Theorem}
\newtheorem*{question*}{Question}
\newtheorem*{statement*}{Statement}
\newtheorem*{corollary*}{Corollary}
\renewcommand{\phi}{\varphi}
\renewcommand{\vec}[1]{\mathbf{#1}}
\renewcommand{\geq}{\geqslant}
\renewcommand{\leq}{\leqslant}
\newcommand{\R}{\mathbb{R}}
\newcommand{\Z}{\mathbb{Z}}
\newcommand{\La}{\Lambda}
\newcommand{\cF}{\mathcal{F}}
\newcommand{\cG}{\mathcal{G}}
\newcommand{\cP}{\mathcal{P}}
\newcommand{\cQ}{\mathcal{Q}}
\newcommand{\tr}[1]{{#1}^\intercal}
\begin{document}

\maketitle

\begin{abstract}
  \noindent
  Khintchine's and Dyson's transference theorems can be very easily deduced from Mahler's transference theorem. In the multiplicative setting an obstacle appears, which does not allow deducing the multiplicative transference theorem immediately from Mahler's theorem. Some extra considerations are required, for instance, induction by the dimension. In this paper we propose an analogue of Mahler's theorem which implies the multiplicative transference theorem immediately.
\end{abstract}


\section{Introduction}\label{sec:intro}

Consider a matrix
\[
  \Theta=
  \begin{pmatrix}
    \theta_{11} & \cdots & \theta_{1m} \\
    \vdots & \ddots & \vdots \\
    \theta_{n1} & \cdots & \theta_{nm}
  \end{pmatrix},\qquad
  \theta_{ij}\in\R,\quad m+n\geq3,
\]
and a system of linear equations
\begin{equation*} 
  \Theta\vec x=\vec y
\end{equation*}
with variables $\vec x=(x_1,\ldots,x_m)\in\R^m$, $\vec y=(y_1,\ldots,y_n)\in\R^n$. One of the main questions in the theory od Diophantine approximation is how small the vector $\Theta\vec x-\vec y$ can be as $\vec x$ and $\vec y$ range independently through $\Z^m\backslash\{\vec 0\}$ and $\Z^n$ respectively. There are several classical ways of measuring the ``size'' of a vector. One can choose a norm, for instance, the sup-norm, one can alter it turning it into a so called weighted norm, or one can consider the product of the absolute values of a vector's coordinates. In each of those settings there exist \emph{transference theorems} --- statements reflecting the relation between the approximation properties of $\Theta$ and those of $\tr\Theta$. They are usually formulated in terms of \emph{Diophantine exponents}, which are probably the simplest quantities responsible for the approximation properties.

Given a positive integer $k$ and $\vec z=(z_1,\ldots,z_k)\in\R^k$, we denote
\[
  |\vec z|=\max_{1\leq i\leq k}|z_i|,
  \qquad
  \Pi(\vec z)=\prod_{\begin{subarray}{c}1\leq i\leq k\end{subarray}}|z_i|^{1/k},
  \qquad
  \Pi'(\vec z)=\prod_{1\leq i\leq k}\max\big(1,|z_i|\big)^{1/k}.
\]

\begin{definition} \label{def:beta}
  Supremum of real numbers $\gamma$ for which there exists $t$ however large such that the system of inequalities
  \begin{equation} \label{eq:beta}
    |\vec x|\leq t,\qquad|\Theta\vec x-\vec y|\leq t^{-\gamma}
  \end{equation}
  admits a solution $(\vec x,\vec y)\in\Z^m\oplus\Z^n$ with nonzero $\vec x$ is called the \emph{Diophantine exponent} of $\Theta$ and is denoted by $\omega(\Theta)$.
\end{definition}

\begin{definition} \label{def:mbeta}
  Supremum of real numbers $\gamma$ for which there exists $t$ however large such that the system of inequalities
  \begin{equation} \label{eq:mbeta}
    \Pi'(\vec x)\leq t,
    \qquad
    \Pi(\Theta\vec x-\vec y)\leq t^{-\gamma}
  \end{equation}
  admits a solution $(\vec x,\vec y)\in\Z^m\oplus\Z^n$ with nonzero $\vec x$ is called the \emph{multiplicative Diophantine exponent} of $\Theta$ and is denoted by $\omega_\times(\Theta)$.
\end{definition}

For each $\vec z\in\R^k$ we have
\[
  \Pi(\vec z)\leq|\vec z|,
\]
and for each $\vec z\in\Z^k$ we have
\[
  |\vec z|^{1/k}\leq\Pi'(\vec z)\leq|\vec z|.
\]
Hence
\begin{equation}\label{eq:trivial_inequalities}
  m/n\leq
  \omega(\Theta)\leq
  \omega_\times(\Theta)\leq
  \begin{cases}
    m\omega(\Theta)\quad
    \text{ for }n=1 \\
    +\infty\qquad\
    \text{ for }n\geq2
  \end{cases},
\end{equation}
where the first inequality is a consequence of Minkowski's convex body theorem.

The inequalities \eqref{eq:trivial_inequalities} can be called trivial. The transference theorems mentioned above provide the following nontrivial relations:
\begin{equation} \label{eq:dyson_transference}
  \omega(\tr\Theta)\geq\frac{n\omega(\Theta)+n-1}{(m-1)\omega(\Theta)+m}
\end{equation}
and
\begin{equation}\label{eq:german_multiplicative_dysonlike}
  \omega_\times(\tr\Theta)\geq\frac{n\omega_\times(\Theta)+n-1}{(m-1)\omega_\times(\Theta)+m}\,.
\end{equation}
The inequality \eqref{eq:dyson_transference} belongs to Dyson \cite{dyson}, the inequality \eqref{eq:german_multiplicative_dysonlike} was proved by the author in \cite{german_mult}. One can notice that the inequalities look identical, however, there is an essential difference between their proofs. Dyson's inequality follows almost immediately from Mahler's transference theorem (see \cite{cassels_DA}, \cite{schmidt_DA}, and also \cite{german_MJCNT_2012}, \cite{german_evdokimov}), whereas the inequality for the multiplicative exponents, along with Mahler's theorem, requires induction by $n$. Roughly speaking, the reason is that the functionals $\Pi(\cdot)$ and $\Pi'(\cdot)$ are not the same.

The purpose of this paper is to find an analogue of Mahler's transference theorem so that it would imply \eqref{eq:german_multiplicative_dysonlike} as immediately as the classical Mahler theorem implies \eqref{eq:dyson_transference}.

The rest of the paper is organised as follows. In Section \ref{sec:mahler_vs_dyson} we formulate Mahler's theorem and show how to derive Dyson's inequality from it. In Section \ref{sec:multiplicative_analogue_of_mahler} we formulate and prove the main result of this paper. In Section \ref{sec:proof_of_multiplicative_transference} we derive \eqref{eq:german_multiplicative_dysonlike} from our result.

\section{Mahler's theorem and Dyson's inequality}\label{sec:mahler_vs_dyson}

Set
\[
  d=m+n.
\]
In his original paper \cite{mahler_casopis_linear}, Mahler formulated his famous theorem in terms of bilinear forms with integer coefficients (see also \cite{cassels_DA} and \cite{german_evdokimov})). In \cite{german_evdokimov} Mahler's theorem is interpreted in terms of pseudocompound parallelepipeds and dual lattices. We deem this interpretation more apt for applications. A pseudocompound parallelepiped is a concept proposed in Schmidt's book \cite{schmidt_DA}, it is a simplification of what Mahler calls in his papers \cite{mahler_compound_bodies_I}, \cite{mahler_compound_bodies_II} the $(d-1)$-th compound body of a parallelepiped.

\begin{definition}\label{def:pseudo_compound}
  Let $\eta_1,\ldots,\eta_d$ be positive real numbers. Consider the parallelepiped
  \begin{equation}\label{eq:pseudo_compound}
    \cP=\Big\{ \vec z=(z_1,\ldots,z_d)\in\R^d \,\Big|\, |z_i|\leq\eta_i,\ i=1,\ldots,d \Big\}.
  \end{equation}
  The parallelepiped
  \[
    \cP^\ast=\Big\{ \vec z=(z_1,\ldots,z_d)\in\R^d \,\Big|\, |z_i|\leq\frac1{\eta_i}\prod_{j=1}^d\eta_j,\ i=1,\ldots,d \Big\}
  \]
  is called the \emph{pseudocompound} of $\cP$.
\end{definition}

We remind that, given a full-rank lattice $\La$ in $\R^d$, its dual lattice $\La^\ast$ is defined as
\[
  \La^\ast=\big\{\, \vec z\in\R^d \,\big|\ \langle\vec z,\vec z'\rangle\in\Z\text{ for each }\vec z'\in\La \,\big\},
\]
where $\langle \,\cdot\,,\cdot\,\rangle$ denotes the inner product.

The following version of Mahler's transference theorem is proposed in \cite{german_evdokimov}.

\begin{theorem}\label{t:mahler_reformulated}
  Let $\La$ be a full-rank lattice in $\R^d$ with determinant equal to $1$. Let $\cP$ be a parallelepiped centered at the origin with faces parallel to the coordinate planes. Then
  \begin{equation*}
    \cP^\ast\cap\La^\ast\neq\{\vec 0\}
    \implies
    c\cP\cap\La\neq\{\vec 0\}
  \end{equation*}
  with $c=\big(\sqrt d\big)^{1/(d-1)}$.
\end{theorem}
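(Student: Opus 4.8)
The strategy is to reduce the statement to the classical Mahler transference theorem via the theory of compound (exterior) bodies and lattices. The key observation is that the pseudocompound $\cP^\ast$ is, up to a controlled multiplicative constant, the $(d-1)$-st compound body of $\cP$, and that $(\La^\ast)$ relates to the $(d-1)$-st compound lattice of $\La$ via the natural isomorphism $\bigwedge^{d-1}\R^d \cong \R^d$ (sending a decomposable $(d-1)$-vector to its "vector product"), under which the inner product is preserved up to sign. Since $\det\La = 1$, the compound lattice $\bigwedge^{d-1}\La$ is unimodular, and its image under this isomorphism is precisely $\La^\ast$ (again possibly up to reindexing/signs, which do not affect the argument).

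First I would set up the exterior algebra framework: fix the standard basis $\vec e_1,\ldots,\vec e_d$ of $\R^d$, identify $\bigwedge^{d-1}\R^d$ with $\R^d$ by matching $\vec e_1\wedge\cdots\wedge\widehat{\vec e_i}\wedge\cdots\wedge\vec e_d$ (hat meaning omission) to $\pm\vec e_i$, and check that for $\vec v_1,\ldots,\vec v_{d-1}\in\R^d$ the vector associated to $\vec v_1\wedge\cdots\wedge\vec v_{d-1}$ is orthogonal to each $\vec v_j$. Then I would verify the two compound-body facts: (i) the $(d-1)$-st compound of the box $\cP$ with half-axes $\eta_1,\ldots,\eta_d$ is contained in (and contains a constant multiple of) the box with half-axes $\eta_i^{-1}\prod_j\eta_j$, so it is comparable to $\cP^\ast$; and (ii) if a basis $\vec b_1,\ldots,\vec b_d$ of $\La$ has the property that some $\vec b_k$ lies in $c\cP$, this is what we want, and conversely a short vector in the compound lattice yields, via the wedge-product description, $d-1$ basis vectors of $\La$ spanning a sublattice whose "missing direction" is short.

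Then the core of the proof is: assume $\cP^\ast\cap\La^\ast\ne\{\vec 0\}$. By fact (i), the compound box $\cP^{(d-1)}$ (the true $(d-1)$-st compound of $\cP$) contains a nonzero point of the compound lattice $\La^{(d-1)}$, after adjusting $\cP$ by the constant; more precisely, I would track that $\cP^\ast$ and $\cP^{(d-1)}$ differ by a factor that gets absorbed. Applying Mahler's original compound-body transference inequality — which says that if the $(d-1)$-st compound body of a convex symmetric body $K$ (here the box $\cP$, suitably normalized so that $\La$ is unimodular) contains a nonzero compound-lattice point, then a bounded dilate of $K$ contains a nonzero point of $\La$ — yields $c\cP\cap\La\ne\{\vec 0\}$. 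The constant $c=(\sqrt d)^{1/(d-1)}$ should come out of Minkowski's second theorem applied to $\cP$ and its compound: one has $\lambda_1(\cP)\lambda_d(\cP)\ge$ (something), while $\lambda_1(\cP^{(d-1)})$ equals $\lambda_d(\cP)$ up to the factor $\prod\eta_j$ which is exactly $\vol(\cP)/2^d$, and chasing Minkowski's theorem ($\lambda_1\cdots\lambda_d \le 2^d/\vol$, together with $\det\La=1$) gives the bound $\lambda_1(\cP)^{d-1}\le \big(\sqrt d\big)$ — hence $\lambda_1(\cP)\le c$.

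The main obstacle I anticipate is bookkeeping the precise constants: matching the normalization of "compound body" (whether half-axes, full axes, or the $\binom{d}{d-1}$-dimensional ambient space with its own metric) against the clean definition of $\cP^\ast$ given here, and making sure the factor that relates $\|\vec v_1\wedge\cdots\wedge\vec v_{d-1}\|$ (Euclidean norm of the associated vector) to the $(d-1)$-volume of the parallelepiped they span does not leak an uncontrolled power of $d$. The value $\sqrt d$ strongly suggests that exactly one application of "a box in $\R^d$ of volume $V$ contains a lattice point of a unimodular lattice once $V \ge 2^d$, hence $\lambda_1 \le$ (geometric-mean bound) $\le \sqrt d\,$" is being used, combined with the duality $\lambda_i(\cP)\,\lambda_{d+1-i}(\cP^{(d-1)}\!/\!\!\prod\eta_j)\asymp 1$; I would isolate this as a lemma and prove it by the standard Minkowski-second-theorem argument so the constant is transparent. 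An alternative, if the compound-lattice machinery feels heavy, is to argue directly: take a nonzero $\vec w\in\cP^\ast\cap\La^\ast$, consider the $(d-1)$-dimensional sublattice $\La\cap \vec w^\perp$ (it has a short basis by Minkowski applied inside the hyperplane, using that $\vec w$ being in $\cP^\ast$ forces the box cross-section to have large $(d-1)$-volume), extend it to a basis of $\La$, and show the complementary basis vector can be chosen in $c\cP$ because $\det\La=1$ ties the length of that vector to the covolume of $\La\cap\vec w^\perp$; this avoids exterior algebra entirely at the cost of a slightly more hands-on hyperplane argument.
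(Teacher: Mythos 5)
A preliminary remark: the paper does not prove Theorem \ref{t:mahler_reformulated} at all --- it is quoted from \cite{german_evdokimov} and used as a black box. So the only meaningful comparison is with the proof in that reference, which follows the route you relegate to your final ``alternative'': take a primitive $\vec w\in\cP^\ast\cap\La^\ast\setminus\{\vec 0\}$, pass to the central section $\cP\cap\vec w^{\perp}$ and the sublattice $\La\cap\vec w^{\perp}$ (which has covolume $\|\vec w\|_2$ in the hyperplane, since $\det\La=1$), and apply Minkowski's first theorem inside the hyperplane. Note that the nonzero point of $\La$ in $c\cP$ is then found \emph{inside} $\vec w^{\perp}$, as a short vector of $\La\cap\vec w^{\perp}$; your step ``extend to a basis of $\La$ and show the complementary basis vector can be chosen in $c\cP$'' is both unnecessary and unworkable, because only the component of that transversal vector along $\vec w$ is controlled by $\det\La/\covol(\La\cap\vec w^{\perp})$ --- there is no reason it can be chosen inside $c\cP$.

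The genuine gap, in both of your routes, is the constant. Mahler's compound-body transference, which your main route invokes, yields $c=d-1$, and the paper explicitly states that Theorem \ref{t:mahler_reformulated} is a \emph{strengthening} of Mahler's original result precisely in this respect. Your plan to recover $(\sqrt d)^{1/(d-1)}$ from Minkowski's second theorem together with a duality $\lambda_i(\cP)\,\lambda_{d+1-i}(\cP^{(d-1)})\asymp1$ cannot work as described: those dualities between minima of a body and of its compound themselves carry $d$-dependent constants (of order $\binom{d}{d-1}$ or worse), and chasing them does not produce $\sqrt d$. The actual source of $\sqrt d$ in \cite{german_evdokimov} is Vaaler's theorem: after rescaling $\cP$ to the cube $[-1,1]^d$, every central $(d-1)$-dimensional section has volume at least $2^{d-1}$, while the rescaled $\vec w$ satisfies $\|\vec w\|_2\le\sqrt d\,\|\vec w\|_\infty\le\sqrt d\,\prod_j\eta_j$; the factor $\sqrt d$ is exactly what Minkowski's theorem in the hyperplane must absorb, giving $c^{d-1}=\sqrt d$. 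The naive section bound via Fubini and Brunn--Minkowski (central section volume at least $\vol(\cP)$ divided by twice the width in the direction $\vec w$) loses a factor and gives only $c^{d-1}=d$. Since any constant depending only on $d$ suffices for the Diophantine-exponent applications in this paper, either of your routes, properly executed, would do for the corollaries --- but neither, as written, proves the theorem with the stated constant, and the hyperplane route needs the section estimate (the one step you assert without proof) made precise.
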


Theorem \ref{t:mahler_reformulated} is actually a strengthening of the original Mahler's theorem. Mahler formulated his theorem with $d-1$ instead of $c$. We note however that, from the point of view of Diophantine exponents, any constant (depending on $d$ only) will do.

Let us show how to derive \eqref{eq:dyson_transference} from Theorem \ref{t:mahler_reformulated}. Recall that $d=m+n$.

Consider the lattices
\begin{equation}\label{eq:lattices_arbitrary_nm}
  \La=\La(\Theta)=
  \begin{pmatrix}
    \vec I_m & \\
    -\Theta  & \vec I_n
  \end{pmatrix}
  \Z^d,
  \qquad
  \La^\ast=\La^\ast(\Theta)=
  \begin{pmatrix}
    \vec I_m & \tr\Theta \\
    & \vec I_n
  \end{pmatrix}
  \Z^d.
\end{equation}
Clearly, $\La^\ast$ is the dual lattice of $\La$. Furthermore, for each set of positive $t$, $\gamma$, $s$, $\delta$, let us define the parallelepipeds
\begin{align}
  \label{eq:t_gamma_family_arbitrary_nm}
  \cP(t,\gamma) & =\Bigg\{\,\vec z=(z_1,\ldots,z_d)\in\R^d \ \Bigg|
                        \begin{array}{l}
                          |z_j|\leq t,\qquad\ \ j=1,\ldots,m \\
                          |z_{m+i}|\leq t^{-\gamma},\ \ i=1,\ldots,n
                        \end{array} \Bigg\}, \\
  \label{eq:s_delta_family_arbitrary_nm}
  \cQ(s,\delta) & =\Bigg\{\,\vec z=(z_1,\ldots,z_d)\in\R^d \ \Bigg|
                        \begin{array}{l}
                          |z_j|\leq s^{-\delta},\quad\ \ j=1,\ldots,m \\
                          |z_{m+i}|\leq s,\quad\ \ i=1,\ldots,n
                        \end{array} \Bigg\}.
\end{align}
Then
\begin{equation}\label{eq:omega_vs_parallelepipeds_arbitrary_nm}
\begin{aligned}
  \omega(\Theta) & =\sup\bigg\{ \gamma\geq\frac mn \,\bigg|\ \forall\,t_0\in\R\,\ \exists\,t>t_0:\ \cP(t,\gamma)\cap\La\neq\{\vec 0\} \bigg\}, \\
  \omega(\tr\Theta) & =\sup\bigg\{ \delta\geq\frac nm\ \bigg|\ \forall\,s_0\in\R\,\ \exists\,s>s_0:\ \cQ(s,\delta)\cap\La^\ast\neq\{\vec 0\} \bigg\}.
\end{aligned}
\end{equation}
If $t$, $\gamma$, $s$, $\delta$ are related by
\begin{equation}\label{eq:Q_is_P_star_arbitrary_nm}
  t=s^{((n-1)\delta+n)/(d-1)},
  \qquad
  \gamma=\frac{m\delta+m-1}{(n-1)\delta+n},
\end{equation}
then $\cQ(s,\delta)$ is the pseudocompound of $\cP(t,\gamma)$, that is $\cQ(s,\delta)=\cP(t,\gamma)^\ast$. By Theorem \ref{t:mahler_reformulated} we get
\begin{equation*}
  \cQ(s,\delta)\cap\La^\ast\neq\{\vec 0\}
  \implies
  c\cP(t,\gamma)\cap\La\neq\{\vec 0\}.
\end{equation*}
Hence, in view of \eqref{eq:omega_vs_parallelepipeds_arbitrary_nm},
\[
  \omega(\tr\Theta)\geq\delta
  \implies
  \omega(\Theta)\geq\gamma=\frac{m\delta+m-1}{(n-1)\delta+n}\,.
\]
Thus,
\begin{equation*}
  \omega(\Theta)\geq\frac{\omega(m\tr\Theta)+m-1}{(n-1)\omega(\tr\Theta)+n}\,.
\end{equation*}
Swapping the triple $(\Theta,m,n)$ for $(\tr\Theta,n,m)$, we get \eqref{eq:dyson_transference}.

\section{An analogue of Mahler's theorem}\label{sec:multiplicative_analogue_of_mahler}

For each tuple $(\pmb\lambda,\pmb\mu)=(\lambda_1,\ldots,\lambda_m,\mu_1,\ldots,\mu_n)\in\R_+^d$, we define the parallelepiped $\cP(\pmb\lambda,\pmb\mu)$ as
\begin{equation}\label{eq:prallelepipeds_mult}
  \cP(\pmb\lambda,\pmb\mu)=\Bigg\{\,\vec z=(z_1,\ldots,z_d)\in\R^d \ \Bigg|
  \begin{array}{l}
    |z_j|\leq\lambda_j,\quad\,\ j=1,\ldots,m \\
    |z_{m+i}|\leq\mu_i,\ \ i=1,\ldots,n
  \end{array} \Bigg\}.
\end{equation}
Let us also set
\begin{equation}\label{eq:lambda_mu_ast}
\begin{aligned}
  & \lambda_j^\ast=\lambda_j^{-1}\prod_{k=1}^m\lambda_k\prod_{k=1}^n\mu_k,
    \qquad j=1,\ldots,m,
    \vphantom{\bigg|} \\
  & \mu_i^\ast=\mu_i^{-1}\prod_{k=1}^m\lambda_k\prod_{k=1}^n\mu_k,
    \qquad\,i=1,\ldots,n.
\end{aligned}
\end{equation}
Then, clearly, $\cP(\pmb\lambda,\pmb\mu)^\ast=\cP(\pmb\lambda^\ast,\pmb\mu^\ast)$. Finally, we define the tuple $\hat{\pmb\lambda}=(\hat\lambda_1,\ldots,\hat\lambda_m)$ as follows. Let us sort the elements of $\pmb\lambda$ in ascending order: $\lambda_{j_1}\leq\ldots\leq\lambda_{j_m}$. If $\lambda_{j_1}\geq1$, we set $\hat{\pmb\lambda}=\pmb\lambda$. If $\lambda_{j_1}<1$, we set $p$ to be the greatest index such that $\lambda_{j_1}\cdot\ldots\cdot\lambda_{j_p}<1$ and define $\hat\lambda_1,\ldots,\hat\lambda_m$ as
\begin{equation}\label{eq:lambda_mu_hat_general_case}
\begin{aligned}
  & \hat\lambda_{j_i}=1,
    \qquad\qquad\qquad\qquad\qquad\qquad\ i=1,\ldots,p, \\
  & \hat\lambda_{j_i}=\lambda_{j_i}\big(\lambda_{j_1}\cdot\ldots\cdot\lambda_{j_k}\big)^{1/(m-p)},
    \qquad\ \ i=p+1,\ldots,m.
    \vphantom{1^{\big|}}
\end{aligned}
\end{equation}

The following theorem is the main result of the paper.

\begin{theorem}\label{t:multi_transference_essence}
  Let $\La$ and $\La^\ast$ be defined by \eqref{eq:lattices_arbitrary_nm}. Consider arbitrary tuples $\pmb\lambda=(\lambda_1,\ldots,\lambda_m)\in\R_+^m$ and $\pmb\mu=(\mu_1,\ldots,\mu_n)\in\R_+^n$. Suppose
  \begin{equation}\label{eq:multi_transference_essence_Pi_geq_1}
    \Pi(\pmb\lambda)\geq1.
  \end{equation}
   Let $\pmb\lambda^\ast$, $\pmb\mu^\ast$ be defined by \eqref{eq:lambda_mu_ast}, and let $\hat{\pmb\lambda}$ be defined by \eqref{eq:lambda_mu_hat_general_case}. Then
  \begin{equation}\label{eq:multi_transference_essence_Pi_hat_equals_Pi}
    \min_{1\leq j\leq m}\hat\lambda_j\geq1,
    \qquad
    \Pi'(\hat{\pmb\lambda})=
    \Pi(\hat{\pmb\lambda})=
    \Pi(\pmb\lambda),
  \end{equation}
  and
  \begin{equation}\label{eq:multi_transference_essence}
    \cP(\pmb\lambda^\ast,\pmb\mu^\ast)\cap\La^\ast\neq\{\vec 0\}
    \implies
    c_1\cP(\hat{\pmb\lambda},\pmb\mu)\cap\La\neq\{\vec 0\}
  \end{equation}
  with $c_1=\big(\sqrt{n+1}\big)^{1/n}$.
\end{theorem}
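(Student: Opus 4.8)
The plan is to split the statement into its two independent assertions — the combinatorial/arithmetic claim \eqref{eq:multi_transference_essence_Pi_hat_equals_Pi} about the auxiliary tuple $\hat{\pmb\lambda}$, and the transference implication \eqref{eq:multi_transference_essence} — and to reduce the latter to Theorem \ref{t:mahler_reformulated} applied not in $\R^d$ but in a cleverly chosen $(n+1)$-dimensional sublattice.

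First I would dispose of \eqref{eq:multi_transference_essence_Pi_hat_equals_Pi}. If $\min_j\lambda_j\geq1$ there is nothing to prove, so assume $\lambda_{j_1}<1$ and let $p$ be as in the definition. That $\hat\lambda_j\geq1$ for all $j$ is immediate for the indices $j_1,\ldots,j_p$; for $i>p$ one uses maximality of $p$, namely $\lambda_{j_1}\cdots\lambda_{j_{p+1}}\geq1$ together with $\lambda_{j_i}\geq\lambda_{j_{p+1}}$, to check $\hat\lambda_{j_i}\geq1$ (here I read the exponent in \eqref{eq:lambda_mu_hat_general_case} as $(\lambda_{j_1}\cdots\lambda_{j_p})^{1/(m-p)}$, the product running up to $j_p$). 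Since every $\hat\lambda_j\geq1$ we get $\Pi'(\hat{\pmb\lambda})=\Pi(\hat{\pmb\lambda})$ for free, and the identity $\Pi(\hat{\pmb\lambda})=\Pi(\pmb\lambda)$ is a direct computation: the product $\prod_j\hat\lambda_j$ equals $\prod_{i>p}\lambda_{j_i}\cdot(\lambda_{j_1}\cdots\lambda_{j_p})^{(m-p)/(m-p)}=\prod_j\lambda_j$, which is $\geq1$ by \eqref{eq:multi_transference_essence_Pi_geq_1}.

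The core is \eqref{eq:multi_transference_essence}. The obstacle is that $\cP(\hat{\pmb\lambda},\pmb\mu)$ is \emph{not} the pseudocompound of $\cP(\pmb\lambda^\ast,\pmb\mu^\ast)$ — replacing $\pmb\lambda$ by $\hat{\pmb\lambda}$ breaks the $\cP\mapsto\cP^\ast$ symmetry — so the hypothesis of Theorem \ref{t:mahler_reformulated} in dimension $d$ does not directly apply. The key idea I would use: given a nonzero point of $\cP(\pmb\lambda^\ast,\pmb\mu^\ast)\cap\La^\ast$, look at which of the first $m$ coordinates it can afford to be large in. Because $\lambda^\ast_{j}=\lambda_j^{-1}\prod\lambda\prod\mu$, the small $\lambda_j$ (those with $j\in\{j_1,\ldots,j_p\}$) correspond to \emph{large} bounds $\lambda^\ast_j$, and conversely. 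One then passes to the rational subspace $V$ of $\R^d$ cut out by setting the coordinates $z_{j_{p+1}},\ldots,z_{j_m}$ to zero — an $(n+p)$-dimensional space — intersect both lattices with $V$, and observe that on $V\cap\La^\ast$ the surviving box is governed only by $\lambda^\ast_{j_1},\ldots,\lambda^\ast_{j_p}$ and $\mu^\ast_1,\ldots,\mu^\ast_n$; the crucial normalization $\lambda_{j_1}\cdots\lambda_{j_p}<1$ is exactly what makes the relevant $(n+p)$-dimensional determinant come out $\le1$ after renormalizing, and a further projection collapses the $p$ free coordinates into a single effective direction, leaving an $(n+1)$-dimensional lattice of covolume $1$. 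Applying Theorem \ref{t:mahler_reformulated} in that dimension yields the constant $c_1=(\sqrt{n+1})^{1/n}$; lifting the resulting nonzero lattice point back to $\R^d$ and checking it lies in $c_1\cP(\hat{\pmb\lambda},\pmb\mu)$ uses precisely the definition \eqref{eq:lambda_mu_hat_general_case} of $\hat\lambda_{j_i}$ for $i>p$, which was reverse-engineered so that the box matches the pseudocompound relation \emph{within} the projected lattice.

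The main difficulty will be organizing this descent cleanly: verifying that the projected object is genuinely a full-rank lattice in $\R^{n+1}$, that its covolume is $1$ (this is where $\Pi(\pmb\lambda)\geq1$, hence $\prod\lambda_j\cdot\prod\mu_i\geq\prod\mu_i$, is consumed), and that the pseudocompound of the small box equals the box with sidelengths $\hat\lambda_{j_i}$ ($i>p$) and $\mu_i$ — matching the exponent $1/(m-p)$ in \eqref{eq:lambda_mu_hat_general_case}. Once the bookkeeping of determinants and coordinate projections is set up, the implication \eqref{eq:multi_transference_essence} follows by a single invocation of Theorem \ref{t:mahler_reformulated}, mirroring the derivation of Dyson's inequality in Section \ref{sec:mahler_vs_dyson} but carried out in the reduced dimension $n+1$ rather than $d$.
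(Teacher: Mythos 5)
Your treatment of \eqref{eq:multi_transference_essence_Pi_hat_equals_Pi} is correct and agrees with the paper (including the right reading of the misprint $j_k\to j_p$ in \eqref{eq:lambda_mu_hat_general_case}). The genuine problem is the core implication \eqref{eq:multi_transference_essence}, where your reduction goes in the wrong direction. You pass to the subspace $V$ obtained by killing the coordinates $z_{j_{p+1}},\ldots,z_{j_m}$, i.e.\ you \emph{keep} the small-$\lambda_j$ (equivalently large-$\lambda_j^\ast$) directions. This discards exactly the useful part of the hypothesis: the tight dual constraints $|z_j|\leq\lambda_j^\ast$ for $j\in\{j_{p+1},\ldots,j_m\}$ are thrown away while the loose ones are retained. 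Worse, on the primal side a point of $\La\cap V$ has $x_{j_{p+1}}=\ldots=x_{j_m}=0$, and membership in $c_1\cP(\hat{\pmb\lambda},\pmb\mu)$ forces $|x_{j_i}|\leq c_1\hat\lambda_{j_i}=c_1<2$ for $i\leq p$, so you would need some $\vec x\in\{-1,0,1\}^p$ with $\Theta\vec x$ within $c_1\mu_i$ of an integer vector --- nothing in the hypothesis can deliver that. The paper does the opposite: it deletes the $p$ small-$\lambda$ coordinates and works in the $(z_{p+1},\ldots,z_d)$-plane of dimension $d-p$, where $\La$ restricts to the full-rank sublattice $\La_\downarrow$ and $\La^\ast$ \emph{projects} onto $\La^\ast_\downarrow$; the projected dual point stays in the box with the small sides $\lambda^\ast_{p+1},\ldots,\lambda^\ast_m,\mu_1^\ast,\ldots,\mu_n^\ast$, which is contained in the genuine $(d-p)$-dimensional pseudocompound of $\cP(\hat{\pmb\lambda}_\downarrow,\pmb\mu)$ precisely because $(\lambda_1\cdots\lambda_p)^{-1/(m-p)}>1$; Mahler's theorem is then applied there, and the resulting point of $\La_\downarrow$ is lifted back with zeros in the deleted coordinates (which is harmless since $\hat\lambda_{j_i}=1$ there).

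Two further defects. First, ``intersect both lattices with $V$'' is not legitimate on the dual side: for generic $\Theta$ the intersection $\La^\ast\cap V$ with a coordinate subspace has rank far below $\dim V$; the correct dual of intersecting $\La$ with a rational subspace is orthogonally projecting $\La^\ast$ onto it. Second, the constant $c_1=\big(\sqrt{n+1}\big)^{1/n}$ does not arise from actually landing in dimension $n+1$, and your ``collapse of the $p$ free coordinates into a single effective direction'' has no lattice-theoretic meaning --- such a collapse preserves neither discreteness nor covolume in any controlled way. In the paper the reduction lands in dimension $d-p$, Mahler's theorem gives $c_2=\big(\sqrt{d-p}\big)^{1/(d-p-1)}$, and $c_1$ is simply the worst case of $c_2$ over $p\leq m-1$, because $k\mapsto k^{1/(2(k-1))}$ is decreasing.
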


\begin{proof}
  Without loss of generality, we may assume that
  \[
    \lambda_1\leq\ldots\leq\lambda_m.
  \]
  If $\lambda_1\geq1$, then $\hat{\pmb\lambda}=\pmb\lambda$, \eqref{eq:multi_transference_essence_Pi_hat_equals_Pi} is obvious, and \eqref{eq:multi_transference_essence} is provided by Theorem \ref{t:mahler_reformulated}, since $c\leq c_1$. Let us assume that $\lambda_1<1$. Then $p$ is correctly defined and $p<m$, since \eqref{eq:multi_transference_essence_Pi_geq_1} holds. Hence \eqref{eq:multi_transference_essence_Pi_hat_equals_Pi} follows immediately.

  Let us consider the truncated tuples
  \[
    \pmb\lambda_\downarrow=(\lambda_{p+1},\ldots,\lambda_{m}),
    \qquad
    \pmb\lambda^\ast_\downarrow=(\lambda^\ast_{p+1},\ldots,\lambda^\ast_{m}),
    \qquad
    \hat{\pmb\lambda}_\downarrow=(\hat\lambda_{p+1},\ldots,\hat\lambda_m).
  \]
  Then
  \[
    \cP(\hat{\pmb\lambda}_\downarrow,\pmb\mu)^\ast=
    \Bigg\{\,(z_{p+1},\ldots,z_d)\in\R^{d-p} \ \Bigg|
    \begin{array}{l}
      |z_j|\leq \varkappa\lambda_j^\ast,\quad\, j=p+1,\ldots,m \\
      |z_{m+i}|\leq\mu_i^\ast,\quad i=1,\ldots,n
    \end{array} \Bigg\},
  \]
  where $\varkappa=\big(\lambda_1\cdot\ldots\cdot\lambda_p\big)^{-1/(m-p)}$. Since $\varkappa>1$, we have
  \begin{equation}\label{eq:ammendment_of_bad}
    \cP(\pmb\lambda_\downarrow^\ast,\pmb\mu^\ast)
    \subset
    \cP(\hat{\pmb\lambda}_\downarrow,\pmb\mu)^\ast.
  \end{equation}
  Let us also consider the matrix
  \[
    \Theta_\downarrow=
    \begin{pmatrix}
      \theta_{1\,p+1} & \cdots & \theta_{1m} \\
      \vdots & \ddots & \vdots \\
      \theta_{n\,p+1} & \cdots & \theta_{nm}
    \end{pmatrix}
  \]
  obtained from $\Theta$ by deleting the first $p$ columns, and the lattices
  \begin{equation*}
    \La_\downarrow=
    \begin{pmatrix}
      \vec I_{m-p}        & \\
      -\Theta_\downarrow  & \vec I_n
    \end{pmatrix}
    \Z^{d-p},
    \qquad
    \La^\ast_\downarrow=
    \begin{pmatrix}
      \vec I_{m-p} & \tr\Theta_\downarrow \\
                   & \vec I_n
    \end{pmatrix}
    \Z^{d-p}.
  \end{equation*}
  We make the following two crucial observations: first, the set
  \[
    \Big\{ (0,\ldots,0,z_{p+1},\ldots,z_d)\in\R^d \ \Big|\
                      (z_{p+1},\ldots,z_d)\in\La_\downarrow \Big\}
  \]
  is a sublattice of $\La$; second, the set
  \[
    \Big\{ (0,\ldots,0,z_{p+1},\ldots,z_d)\in\R^d \ \Big|\
                      (z_{p+1},\ldots,z_d)\in\La^\ast_\downarrow \Big\}
  \]
  is the orthogonal projection of $\La^\ast$ onto the $(z_{p+1},\ldots,z_d)$--coordinate plane. Hence
  \begin{equation}\label{eq:down_and_up}
  \begin{aligned}
    \cP(\pmb\lambda^\ast,\pmb\mu^\ast)\cap\La^\ast\neq\{\vec 0\}
    & \implies
    \cP(\pmb\lambda^\ast_\downarrow,\pmb\mu^\ast)\cap\La^\ast_\downarrow\neq\{\vec 0\}, \\
    \cP(\hat{\pmb\lambda}_\downarrow,\pmb\mu)\cap\La_\downarrow\neq\{\vec 0\}
    & \implies
    \cP(\hat{\pmb\lambda},\pmb\mu)\cap\La\neq\{\vec 0\}.
    \vphantom{1^{\big|}}
  \end{aligned}
  \end{equation}
  Finally, by Theorem \ref{t:mahler_reformulated} we have
  \begin{equation}\label{eq:mahler_downarrowed}
    \cP(\hat{\pmb\lambda}_\downarrow,\pmb\mu)^\ast\cap\La^\ast_\downarrow\neq\{\vec 0\}
    \implies
    c_2\cP(\hat{\pmb\lambda}_\downarrow,\pmb\mu)\cap\La_\downarrow\neq\{\vec 0\}
  \end{equation}
  with $c_2=\big(\sqrt{d-p}\big)^{1/(d-p-1)}$. Gathering up together \eqref{eq:ammendment_of_bad}, \eqref{eq:down_and_up}, \eqref{eq:mahler_downarrowed}, and taking into account that $c_2\leq c_1$, we get the following chain of implications:
  \begin{multline*}
    \cP(\pmb\lambda^\ast,\pmb\mu^\ast)\cap\La^\ast\neq\{\vec 0\}
    \implies
    \cP(\pmb\lambda^\ast_\downarrow,\pmb\mu^\ast)\cap\La^\ast_\downarrow\neq\{\vec 0\}
    \implies \\ \implies
    \cP(\hat{\pmb\lambda}_\downarrow,\pmb\mu)^\ast\cap\La^\ast_\downarrow\neq\{\vec 0\}
    \implies
    c_2\cP(\hat{\pmb\lambda}_\downarrow,\pmb\mu)\cap\La_\downarrow\neq\{\vec 0\}
    \implies \vphantom{\bigg|} \\ \implies
    c_2\cP(\hat{\pmb\lambda},\pmb\mu)\cap\La\neq\{\vec 0\}
    \implies
    c_1\cP(\hat{\pmb\lambda},\pmb\mu)\cap\La\neq\{\vec 0\},
  \end{multline*}
  which proves \eqref{eq:multi_transference_essence}.
\end{proof}

\section{Proof of the multiplicative transference inequality}\label{sec:proof_of_multiplicative_transference}

Let us show how to derive \eqref{eq:german_multiplicative_dysonlike} from Theorem \ref{t:multi_transference_essence}.
For every positive $t$, $\gamma$, $s$, $\delta$, let us define the following two families of parallelepipeds:
\begin{align*}
  \cF(t,\gamma) & =\Big\{\, \cP(\pmb\lambda,\pmb\mu) \ \Big|\
                            \Pi(\pmb\lambda)=t,\
                            \Pi(\pmb\mu)=t^{-\gamma},\
                            \min_{1\leq j\leq m}\lambda_j\geq1 \Big\}, \\
  \cG(s,\delta) & =\Big\{\, \cP(\pmb\lambda,\pmb\mu) \ \Big|\
                            \Pi(\pmb\lambda)=s^{-\delta},\
                            \Pi(\pmb\mu)=s,\
                            \min_{1\leq i\leq n}\mu_i\geq1 \Big\}.
\end{align*}
Each parallelepiped $\cP(\pmb\lambda,\pmb\mu)$ satisfying the conditions
\begin{equation}\label{eq:lambda_mu_for_omega_mult}
  \Pi'(\pmb\lambda)\leq t,
  \qquad
  \Pi(\pmb\mu)\leq t^{-\gamma}
\end{equation}
is contained in a parallelepiped from $\cF(t,\gamma)$. Conversely, each parallelepiped $\cP(\pmb\lambda,\pmb\mu)$ from $\cF(t,\gamma)$ satisfies \eqref{eq:lambda_mu_for_omega_mult}. Similarly, each parallelepiped $\cP(\pmb\lambda,\pmb\mu)$ satisfying the conditions
\begin{equation}\label{eq:lambda_mu_for_omega_mult_transpose}
  \Pi(\pmb\lambda)\leq s^{-\delta},
  \qquad
  \Pi'(\pmb\mu)\leq s
\end{equation}
is contained in a parallelepiped from $\cG(s,\delta)$. And conversely, each parallelepiped $\cP(\pmb\lambda,\pmb\mu)$ from $\cG(s,\delta)$ satisfies \eqref{eq:lambda_mu_for_omega_mult_transpose}. Thus, for multiplicative exponents, the following analogue of \eqref{eq:omega_vs_parallelepipeds_arbitrary_nm} holds:
\begin{equation}\label{eq:omega_mult_vs_parallelepipeds}
\begin{aligned}
  \omega_\times(\Theta) & =
  \sup\bigg\{ \gamma\geq\frac mn \,\bigg|\
              \forall\,t_0\in\R\,\ \exists\,t>t_0:\,\
              \exists\cP\in\cF(t,\gamma):\ \cP\cap\La\neq\{\vec 0\} \bigg\}, \\
  \omega_\times(\tr\Theta) & =
  \sup\bigg\{ \delta\geq\frac nm\ \bigg|\
              \forall\,s_0\in\R\,\ \exists\,s>s_0:\,\
              \exists\cP\in\cG(s,\delta):\ \cP\cap\La^\ast\neq\{\vec 0\} \bigg\}.
\end{aligned}
\end{equation}
Let us assume again that $t$, $\gamma$, $s$, $\delta$ are related by \eqref{eq:Q_is_P_star_arbitrary_nm}. Consider an arbitrary parallelepiped $\cP(\pmb\lambda,\pmb\mu)$ such that $\cP(\pmb\lambda^\ast,\pmb\mu^\ast)\in\cG(s,\delta)$. Then
\[
  \Pi(\pmb\lambda)=t,\qquad\Pi(\pmb\mu)=t^{-\gamma}.
\]
We cannot guarantee that $\pmb\lambda$ has no components strictly less that $1$, so generally it is not true that $\cP(\pmb\lambda,\pmb\mu)\in\cF(t,\gamma)$. Nevertheless, if $t\geq1$, then by Theorem \ref{t:multi_transference_essence} we do have $\cP(\hat{\pmb\lambda},\pmb\mu)\in\cF(t,\gamma)$, and moreover,
\begin{equation*}
  \cP(\pmb\lambda^\ast,\pmb\mu^\ast)\cap\La^\ast\neq\{\vec 0\}
  \implies
  c_1\cP(\hat{\pmb\lambda},\pmb\mu)\cap\La\neq\{\vec 0\}.
\end{equation*}
Hence, in view of \eqref{eq:omega_mult_vs_parallelepipeds},
\[
  \omega_\times(\tr\Theta)\geq\delta
  \implies
  \omega_\times(\Theta)\geq\gamma=\frac{m\delta+m-1}{(n-1)\delta+n}\,.
\]
Thus,
\begin{equation*}
  \omega_\times(\Theta)\geq\frac{\omega_\times(m\tr\Theta)+m-1}{(n-1)\omega_\times(\tr\Theta)+n}\,.
\end{equation*}
Swapping the triple $(\Theta,m,n)$ for $(\tr\Theta,n,m)$, we get \eqref{eq:german_multiplicative_dysonlike}.

\paragraph{Acknowledgements.}


The author is a winner of the ``Junior Leader'' contest conducted by Theoretical Physics and Mathematics Advancement Foundation ``BASIS'' and would like to thank its sponsors and jury.


\end{document}